\newtheorem{theorem}{Theorem}
\newtheorem*{theorem*}{Theorem}
\newtheorem{lemma}[theorem]{Lemma}
\newtheorem{corollary}[theorem]{Corollary}
\numberwithin{equation}{section}
\numberwithin{table}{section}
\numberwithin{figure}{section}
\numberwithin{theorem}{section}
\definecolor{hotpink}{rgb}{0.9,0,0.5}
\def\>{\mskip\medmuskip}
\newcommand{\C}{\ensuremath{\mathbb{C}}}
\newcommand{\floor}[1]{\lfloor #1 \rfloor}
\newcommand{\ceil}[1]{\lceil #1 \rceil}
\DeclareMathOperator{\diag}{diag}
\DeclareMathOperator{\adj}{\operatorname{adj}}
\newcommand\scalemath[2]{\scalebox{#1}{\mbox{\ensuremath{\displaystyle 
#2}}}}
\active\gdef@{\mkern1mu}}
  \newenvironment{keywords}{{\small \noindent\textbf{Key words.}}}{\medskip}
\newenvironment{AMScodes}{{\small \noindent\textbf{AMS subject classifications.}}}{\medskip}
\renewenvironment{abstract}{{\small \noindent\textbf{Abstract.}}}{\medskip}
\title{A note on the cross matrices\thanks{
		\footnotesize
	Version of March 31, 2025.
	}}
\author{Xiaobo Liu%
		\thanks{
			\footnotesize			
			 Max Planck Institute for Dynamics of Complex Technical Systems,
			 Magdeburg, 39106, Germany
			(\url{xliu@mpi-magdeburg.mpg.de}).
		}
}
\date{}
\begin{document}
\maketitle
\begin{abstract}	
A cross matrix $X$ can have nonzero elements located only on 
the main diagonal and the anti-diagonal, so that the sparsity pattern has 
the shape of a cross. It is shown that $X$
can be factorized into products of matrices that are at most rank-two perturbations to the identity matrix and can be symmetrically permuted to block 
diagonal form 
with $2\times 2$ diagonal blocks and, if $n$ is odd, a $1\times 1$ 
diagonal block. 
The permutation similarity implies that any well-defined analytic function of $X$ remains a cross matrix.
By exploiting these properties, explicit formulae for the determinant, inverse, and characteristic polynomial are derived.
It is also shown that the structure of cross matrix can be preserved under matrix factorizations, including the LU, QR, and SVD decompositions.
\end{abstract}

 \begin{keywords}
	Cross matrix, block-diagonal matrix, structure preservation, matrix function, matrix factorization.
\end{keywords}

\begin{AMScodes}
	15B99, 15A23, 65F40  
\end{AMScodes}

\section{Introduction}

A matrix that has nonzero elements only on the main diagonal and anti-diagonal has the sparsity pattern of a cross shape.
We therefore refer to any matrix within this class as ``cross matrix'', which has the form
\begin{equation*}
	X = \scalemath{0.85}{\begin{bmatrix}
			x_{11} &            &    &         & x_{1,2k+1} \\
			&     \ddots&    & \reflectbox{$\ddots$}          & \\
			&      & x_{k+1,k+1} & 	             & \\
			&     \reflectbox{$\ddots$}&    & \ddots      & \\
			x_{2k+1,1} &      &        &  & x_{2k+1,2k+1} 
	\end{bmatrix}},
\end{equation*}
for $n=2k+1$, $k\in\mathbb{N^+}$, and, if $n=2k$, 
\begin{equation*}
	X = \scalemath{0.85}{\begin{bmatrix}
			x_{11} &        &       &    &             & x_{1,2k} \\
			&  \ddots&   & & \reflectbox{$\ddots$}   & \\
			&        & 	    & x_{kk} & x_{k,k+1}	&           & \\
			&        & 	    & x_{k+1,k} & x_{k+1,k+1}	&           & \\
			&     \reflectbox{$\ddots$}&   & & \ddots      & \\
			x_{2k,1} &     &        &    &  & x_{2k,2k} 
	\end{bmatrix}}.
\end{equation*}
It can be viewed as a generalization of a type of compound Jacobi rotation matrices used in the parallel Jacobi algorithm for symmetric eigenvalue problems \cite[sect.~5.8]{modi88}, \cite{same71}.
The set of all $n\times n$ cross matrices with complex entries can be defined as 
\begin{equation*}
	\mathcal{X}_n(\mathbb{C}) = \left\{
	X=(x_{ij})\in M_n(\mathbb{C})\colon 
	x_{ij}=0 \ \text{if}\ j\ne i\ \text{and} \ 
	j\ne n+1-i
	\right\},
\end{equation*}
where $M_n(\mathbb{C})$ is the set of all $n\times n$ matrices over the complex field.
This set, like many other important classes of square matrices, e.g., triangular matrices and diagonal matrices, possesses some special algebraic structure, as shown in Lemma~\ref{lem:ring}.
To this point, it is also interesting to note that 
the set of anti-diagonal matrices does not form a subring of $M_n(\mathbb{C})$ as the closure under multiplication fails to hold.

\begin{lemma}\label{lem:ring}
	$\mathcal{X}_n(\mathbb{C})$ is a subring of $M_n(\mathbb{C})$.
\end{lemma}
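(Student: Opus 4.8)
The plan is to verify the three defining properties of a (unital) subring of $M_n(\C)$: that $\mathcal{X}_n(\C)$ is closed under subtraction, that it contains the identity $I_n$, and that it is closed under multiplication. The first two are immediate and I would dispatch them in one line each: $\mathcal{X}_n(\C)$ is exactly the set of matrices whose entries vanish on a fixed set of index positions, hence a linear subspace of $M_n(\C)$ and in particular an additive subgroup; and $I_n$ has $(I_n)_{ij}=0$ whenever $j\neq i$, so a fortiori whenever $j\neq i$ and $j\neq n+1-i$, giving $I_n\in\mathcal{X}_n(\C)$.

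The substantive step is closure under multiplication, and I would give a structural argument. Let $J\in M_n(\C)$ be the exchange matrix, $J_{ij}=1$ if $i+j=n+1$ and $0$ otherwise; then $J^2=I_n$, and $JDJ$ is diagonal for every diagonal $D$ (it merely reverses the diagonal entries). Every $X\in\mathcal{X}_n(\C)$ can be written as $X=D_1+D_2J$ with $D_1,D_2$ diagonal, $D_1$ carrying the main-diagonal entries and $D_2J$ the anti-diagonal ones. Given $X=D_1+D_2J$ and $Y=E_1+E_2J$ in $\mathcal{X}_n(\C)$,
\[
XY=D_1E_1+D_1E_2J+D_2JE_1+D_2JE_2J=\bigl(D_1E_1+D_2(JE_2J)\bigr)+\bigl(D_1E_2+D_2(JE_1J)\bigr)J,
\]
and since sums and products of diagonal matrices are diagonal, this displays $XY$ in the form $(\text{diagonal})+(\text{diagonal})\,J$, hence $XY\in\mathcal{X}_n(\C)$.

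As an alternative I would note the purely entrywise version: $(XY)_{ij}=\sum_{\ell}x_{i\ell}y_{\ell j}$, and since $x_{i\ell}=0$ unless $\ell\in\{i,\,n+1-i\}$, only those two terms survive, so $(XY)_{ij}=x_{ii}y_{ij}+x_{i,n+1-i}y_{n+1-i,j}$; as $y_{ij}=0$ unless $j\in\{i,n+1-i\}$ and $y_{n+1-i,j}=0$ unless $j\in\{i,n+1-i\}$, we get $(XY)_{ij}=0$ for $j\notin\{i,n+1-i\}$, i.e.\ $XY\in\mathcal{X}_n(\C)$.

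I do not expect a genuine obstacle; the only point needing a moment's care is the overlap of the main diagonal and the anti-diagonal at the central entry $x_{k+1,k+1}$ when $n$ is odd. This is harmless: in the structural argument it only makes the splitting $X=D_1+D_2J$ non-unique (any choice works), and in the entrywise argument it simply merges the two summands $x_{ii}y_{ij}$ and $x_{i,n+1-i}y_{n+1-i,j}$ into one when $i=k+1$, without affecting the conclusion.
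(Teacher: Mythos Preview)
Your proposal is correct and follows essentially the same approach as the paper: the paper's one-line proof says closure under multiplication ``can be seen by splitting a cross matrix into the sum of a diagonal- and a anti-diagonal matrices,'' which is precisely your structural argument $X=D_1+D_2J$ made explicit via the exchange matrix. Your added entrywise alternative and the remark on the odd-$n$ overlap are extra detail the paper omits, but the core idea is identical.
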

\begin{proof}
The satisfaction of the ring axioms is obvious except the closure under multiplication, which can be seen by splitting a cross matrix into the sum of a diagonal- and a anti-diagonal matrices.
\end{proof}

In this work we study properties of cross matrices.
We start with the derivation of two factorizations of cross matrices in Section~\ref{sect:mul-struc}, one as products of low-rank perturbation to the identity matrix and the other as a symmetrically permuted block-diagonal matrix that has diagonal blocks of size no more than two. These factorizations reveal the multiplicative structure of cross matrices and form the cornerstone of the discussion in Section~\ref{sect:det-inv-eig}, where we derive explicit formulae for the determinant, inverse, and characteristic polynomial. Finally, we show in Section~\ref{sect:mat-fac} the structure-preserving property of cross matrices under matrix factorizations that are commonly used in numerical linear algebra. 

Throughout the work we use $I_n$ to denote the identity matrix of order $n$, and with slight abuse of notation, $I_0$ denotes an identity matrix of order $0$ (an empty matrix). We use the MATLAB-style colon notation for indices, i.e., $1\colon k$ represents the index set $\{1,2,\dots,k\}$.
The $\diag$ operator returns a block diagonal matrix when there is a matrix input.

\section{Multiplicative structure}\label{sect:mul-struc}
The first result shows the cross matrices can be factorized into products of matrices that are at most rank-two perturbations to the identity matrix. It can be viewed as a generalization of the cross form of complete Jacobi rotation~\cite[sect.~5.5]{papa93}.

\begin{theorem}\label{thm:fac-ranktwo}
	The cross matrix $X=(x_{ij})$ can be factorized as
	\begin{equation*}
		X=\begin{cases}
			Y_1Y_2\cdots Y_k, & n = 2k,\\
			Y_1Y_2\cdots Y_k\cdot \diag(I_k,x_{k+1,k+1},I_k), 
			& n = 2k+1,
		\end{cases}
	\end{equation*}
	where
	\begin{equation}\label{eq:Y_i}
		Y_i := \scalemath{0.85}{\begin{bmatrix}
			I_{i-1}  &   &   &	&  \\
			         & x_{ii}   & 	& x_{i,n+1-i} &  \\
			&   &  I_{n-2i} &   &  \\
			&x_{n+1-i,i} &   &  x_{n+1-i,n+1-i}& \\
			  &  	&     &    & I_{i-1} \\
		\end{bmatrix}}
	\end{equation}
is the identity matrix with the intersection of its
$i$th and $(n+1-i)$th rows and columns replaced by that of $X$.	
\end{theorem}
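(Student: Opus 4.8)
The plan is to verify the claimed factorization by direct computation, exploiting the fact that the matrices $Y_i$ have essentially disjoint ``active'' index sets. First I would observe that each $Y_i$ is the identity matrix except in the four positions $(i,i)$, $(i,n+1-i)$, $(n+1-i,i)$, $(n+1-i,n+1-i)$, and that for $i \ne j$ these four-element index patterns $\{i, n+1-i\}$ are pairwise disjoint (since $1 \le i < j \le k$ forces $i < j \le k < n+1-j < n+1-i$). Consequently the $Y_i$ commute pairwise, and multiplying them together simply ``overlays'' their nontrivial $2\times 2$ blocks onto the identity without any interference.

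The key computational step is to make this overlay precise. I would argue that for any index $\ell \in 1\colon n$, the $\ell$th row (and column) of the product $Y_1 Y_2 \cdots Y_k$ is: the $\ell$th row of $Y_i$ if $\ell \in \{i, n+1-i\}$ for some (necessarily unique) $i \le k$, and the $\ell$th row of the identity otherwise. This is because in forming the product, the only factor that acts nontrivially on coordinate $\ell$ is $Y_i$ with $i = \min(\ell, n+1-\ell)$ when that minimum is at most $k$; all other factors leave that coordinate fixed. When $n = 2k$, every $\ell$ falls into exactly one such pair, so the product has precisely the entries $x_{ij}$ on the main diagonal and anti-diagonal and zeros elsewhere — i.e., it equals $X$. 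When $n = 2k+1$, the middle index $\ell = k+1$ is untouched by every $Y_i$ (we have $n+1-i = 2k+2-i > k+1$ for $i \le k$), so the product $Y_1\cdots Y_k$ agrees with $X$ everywhere except it has a $1$ rather than $x_{k+1,k+1}$ in the central position; right-multiplying by $\diag(I_k, x_{k+1,k+1}, I_k)$ scales exactly that central column, fixing the discrepancy and yielding $X$.

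An alternative, perhaps cleaner, route is induction on $k$: peel off $Y_1$, note that $Y_1 = I + R_1$ where $R_1$ is supported on rows/columns $\{1, n\}$, and show that $Y_2 \cdots Y_k$ is a cross matrix supported on the complementary index set $\{2, \dots, n-1\}$ (with $1$'s at positions $1$ and $n$ on the diagonal), so that the product $Y_1(Y_2\cdots Y_k)$ just installs the first and last diagonal/anti-diagonal entries without disturbing the interior. Either way, the rank statement is immediate: each $Y_i$ differs from $I_n$ only in a $2\times 2$ submatrix, so $Y_i - I_n$ has rank at most two, and $\diag(I_k, x_{k+1,k+1}, I_k) - I_n$ has rank at most one.

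I do not expect any serious obstacle here; the result is essentially bookkeeping. The one place to be careful is the indexing when $n$ is odd versus even — in particular checking that the central index $k+1$ is genuinely outside the support of every $Y_i$ — and making sure the claim that the $Y_i$ commute is stated and used correctly (it follows from disjointness of supports, but one should note that two matrices that are identity-plus-a-block on disjoint index sets do commute). A secondary minor point is to confirm that the degenerate innermost factor behaves correctly: when $n = 2k$ the factor $Y_k$ acts on rows/columns $\{k, k+1\}$ with $I_{n-2k} = I_0$ the empty block in the middle, and when $n=2k+1$ the factor $Y_k$ acts on $\{k, k+2\}$ with a single $1$ from $I_{n-2k}=I_1$ sandwiched between — so the formula for $Y_i$ in \eqref{eq:Y_i} specializes correctly at $i = k$ in both parities.
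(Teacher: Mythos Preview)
Your proposal is correct. Your \emph{alternative} route --- peeling off $Y_1$ and recursing on the inner $(n-2)\times(n-2)$ cross matrix --- is exactly the paper's proof: it writes $X = Y_1\cdot\diag(1, X(2{:}n{-}1,2{:}n{-}1),1)$ and iterates. Your \emph{primary} route is slightly different and arguably cleaner: you observe that the supports $\{i,n+1-i\}$ are pairwise disjoint for $1\le i\le k$, so with $E_i := Y_i - I$ one has $E_iE_j = 0$ for $i\ne j$, hence $Y_1\cdots Y_k = I + E_1 + \cdots + E_k$ directly, and it only remains to read off the entries. This buys you the pairwise commutativity of the $Y_i$ (the content of the paper's Corollary~\ref{cor:fac-ranktwo}) as an immediate byproduct rather than as a separate observation after the fact. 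The indexing checks you flag (the central index for odd $n$, and the degenerate $I_0$/$I_1$ block in $Y_k$) are the right places to be careful, and your treatment of them is fine.
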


\begin{proof}
First consider the case $n=2k+1$.
Observe that the $X$ can be factorized as 
\begin{equation*}
	X = Y_1 \cdot \diag(1, X(2\colon 2k,2\colon 2k), 1),
\end{equation*}
where the submatrix $X(2\colon 2k,2\colon 2k)$ of order $2k-1$ has the same 
cross-shaped form. It therefore
can be factorized in the same way as
\begin{equation*}
	X(2\colon 2k,2\colon 2k) = \begin{bmatrix}
		x_{22} &          & x_{2,2k} \\
		& I_{2k-3} &	\\
		x_{2k,2} &          & x_{2k,2k} \\
	\end{bmatrix}\cdot \diag(1, X(3\colon 2k-1,3\colon 2k-1), 1).
\end{equation*}
Again, the submatrix $X(3\colon 2k-1,3\colon 2k-1)$ of order $2k-3$ has the same 
cross-shaped form. This process is continued $k$ times,
until we obtain the factorization
\begin{equation*}
 X(k\colon k+2,k\colon k+2) =
	\begin{bmatrix}
		x_{kk} &          & x_{k,k+2} \\
		& 1 &	\\
		x_{k+2,k} &          & x_{k+2,k+2} \\
	\end{bmatrix}\cdot \diag(1, x_{k+1,k+1}, 1).
\end{equation*}
On completion of the process, the desired factorization is obtained by augmenting the factor matrices appropriately into a block-diagonal matrix with identity matrices. 

For the case $n=2k$, the factorization process terminates once 
we arrive at
\begin{equation*}
	X(k \colon k+1,k\colon k+1) =
	\begin{bmatrix}
		x_{kk}    & x_{k,k+1} \\
		x_{k+1,k} & x_{k+1,k+1} \\
	\end{bmatrix},
\end{equation*}
and the results can be proved in a similar way.
\end{proof}

Interestingly, one can show in a similar way to the proof of Theorem~\ref{thm:fac-ranktwo} that the product factorization of $X$ therein also holds in the reverse order. The key is to note that the two factors in the factorizations of $X, X(2:n-1,2:n-1), \dots, X(k:n+1-k,k:n+1-k)$ commute. The result is presented in the following corollary.

\begin{corollary}\label{cor:fac-ranktwo}
The cross matrix $X=(x_{ij})$ satisfies
\begin{equation*}
	X=\begin{cases}
		Y_kY_{k-1}\cdots Y_1, & n = 2k,\\
		\diag(I_k,x_{k+1,k+1},I_k) \cdot Y_kY_{k-1}\cdots Y_1, 
		& n = 2k+1,
	\end{cases}
\end{equation*}
for the same $Y_i$ defined in~\eqref{eq:Y_i}.
\end{corollary}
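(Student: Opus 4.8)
The plan is to follow the structure already laid out in the remark preceding the corollary: to reverse the order of the product $Y_1 Y_2 \cdots Y_k$ it suffices to establish that, at each stage of the recursive factorization in the proof of Theorem~\ref{thm:fac-ranktwo}, the two factors commute. Concretely, I would first verify the key commutation identity: for the cross matrix $X$ of order $n$ (either parity), the two factors appearing at the top level, namely $Y_1$ and $\diag(1, X(2:n-1,2:n-1), 1)$ (or $\diag(I_1, X(2:n-1,2:n-1), I_1)$ written appropriately, with the convention that for $n=2k$ the inner block has even order), commute. This is a short block computation: $Y_1$ is the identity except in rows and columns $\{1,n\}$, while the second factor is the identity in rows and columns $\{1,n\}$ and arbitrary (cross-shaped) in the complementary index set $\{2,\dots,n-1\}$. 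Two matrices that are each the identity on complementary coordinate subspaces — one acting only on $\mathrm{span}\{e_1,e_n\}$ and the other only on $\mathrm{span}\{e_2,\dots,e_{n-1}\}$ — commute because their product in either order is simply the block matrix that agrees with $Y_1$ on $\{1,n\}$ and with $X(2:n-1,2:n-1)$ on $\{2,\dots,n-1\}$, i.e., $X$ itself.

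With the top-level commutation in hand, the argument is a straightforward induction on $n$ (or equivalently on $k$). The base case is $n=2$, where $X = Y_1$ and there is nothing to prove, or $n=3$, where $X = Y_1 \cdot \diag(1, x_{22}, 1)$ and the two factors trivially commute since $\diag(1,x_{22},1)$ is a scalar-type perturbation supported on coordinate $2$ alone. For the inductive step, write $X = Y_1 \cdot \diag(\alpha, X', \alpha)$ where $X' = X(2:n-1,2:n-1)$ is a cross matrix of order $n-2$ and $\alpha$ stands for the appropriate $I_1$ padding (here I am being schematic; in the even case the padding on each side is $I_0$, an empty block, and $X'$ has even order, while in the odd case it is the scalar $1$). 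By the commutation identity, $X = \diag(\alpha, X', \alpha) \cdot Y_1$. Applying the inductive hypothesis to $X'$ gives $X' = Y'_{k-1} Y'_{k-2} \cdots Y'_1$ (with the central scalar factor prepended in the odd case), where $Y'_j$ is the order-$(n-2)$ analogue of $Y_j$; embedding this back, $\diag(\alpha, X', \alpha) = Y_k Y_{k-1} \cdots Y_2$ (again with the central scalar factor, if any, leading), because padding $Y'_j$ by one identity row and column on each side produces exactly $Y_{j+1}$ of order $n$. Combining, $X = Y_k Y_{k-1} \cdots Y_2 Y_1$, which is the claimed reversed order; the odd-case central factor $\diag(I_k, x_{k+1,k+1}, I_k)$ commutes with every $Y_j$ for $j \le k$ (it is supported on coordinate $k+1$, which $Y_j$ leaves fixed), so it can be moved to the front as stated.

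I do not expect any serious obstacle here — the result is essentially bookkeeping once the commutation observation is made. The one point requiring a little care is the correct handling of the index shifts when passing from $Y'_j$ of order $n-2$ to $Y_{j+1}$ of order $n$: the $i$th and $(n-2+1-i)$th rows/columns of the smaller matrix must be identified with the $(i+1)$th and $(n+1-(i+1))$th rows/columns of the larger one, and one should check that this identification is consistent across all $k-1$ recursive steps so that the nested embeddings compose to give precisely $Y_2, Y_3, \dots, Y_k$ and not some permuted relabelling. A second minor subtlety is making the empty-block conventions ($I_0$) unambiguous in the even case so that the base and inductive cases line up. Neither of these is a real difficulty; I would dispatch the commutation lemma as a one-line block-matrix remark and then write the induction in two or three sentences.
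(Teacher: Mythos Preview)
Your proposal is correct and follows exactly the approach the paper indicates: the key observation is that at each recursive step the two factors act on complementary coordinate blocks and therefore commute, and the reversed factorization then follows by the same recursion as in Theorem~\ref{thm:fac-ranktwo}. Your write-up simply makes explicit the induction and index bookkeeping that the paper leaves implicit.
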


For the matrix $Y_i$ in~\eqref{eq:Y_i}, we have
\begin{equation}\label{eq:Y_i-tilde}
	P_iY_iP_i = 
	\diag\left(
	I_{i-1},\begin{bmatrix}
		x_{ii}   &    x_{i,n+1-i}   \\
		x_{n+1-i,i}  & x_{n+1-i,n+1-i}
	\end{bmatrix}, I_{n-i-1}
	\right)
=: \widetilde{Y}_i, \quad i=1\colon \floor{n/2},
\end{equation}
where $P_i$ is the \textit{elementary} permutation matrix (which is symmetric) formed by swapping the $(i+1)$th and
$(n+1-i)$th columns of the identity matrix $I_{n}$, and $\widetilde{Y}_i$ is block diagonal.
The relation~\eqref{eq:Y_i-tilde} can be used to prove the permutation similarity of $X$ to block diagonal matrices, as the next theorem shows.

\begin{theorem}\label{thm:blk-diag}
Any cross matrix	$X=(x_{ij})$ satisfies, for $m\in\mathbb{N}$,
\begin{equation*}
	PXP^T=\begin{cases}
		\diag(B_1,\dots,B_m,C_m,\dots,C_1), & n = 4m,\\
		\diag(B_1,\dots,B_m,x_{k+1,k+1},C_m,\dots,C_1), & n = 
		4m+1,\\
		\diag\left(B_1,\dots,B_m,
		\begin{bsmallmatrix}
			x_{kk} &  x_{k,k+1} \\
			x_{k+1,k} & x_{k+1,k+1}
		\end{bsmallmatrix}
		,C_m,\dots,C_1\right), & n = 4m+2, \\
		\diag(B_1,\dots,B_m,B_{m+1},x_{k+1,k+1},C_m,\dots,C_1), & n = 4m+3,
	\end{cases}
\end{equation*}
where
\begin{equation*}
B_{i}:= \begin{bmatrix}
	x_{2i-1,2i-1} & x_{2i-1,n+2-2i} \\
	x_{n+2-2i,2i-1} & x_{n+2-2i,n+2-2i}
\end{bmatrix}, \quad 
C_{i}:= \begin{bmatrix}
	x_{n+1-2i,n+1-2i} & x_{n+1-2i,2i} \\
	x_{2i,n+1-2i} & x_{2i,2i}
\end{bmatrix},
\end{equation*}
and $P$ is a permutation matrix given by
\begin{equation*}
P=\begin{cases}
	P_{2m+1}P_{2m-1}\cdots P_3P_1, & n = 4m+3, \\
	P_{2m-1}\cdots P_3P_1, & \text{otherwise}.
\end{cases}
\end{equation*}
\begin{proof}
Consider first the case $n=2k$.
We have, using the symmetric permutation matrices $P_i$ defined in~\eqref{eq:Y_i-tilde},
\begin{equation*}
	P_1XP_1 = \diag(B_1, X(3\colon 2k-2,3\colon 2k-2), C_1),
\end{equation*}
and subsequently, 
\begin{equation*}
	P_3P_1XP_1P_3 =
	\diag(B_1,  B_2,
	 X(5\colon 2k-4,5\colon 2k-4), C_2, C_1).
\end{equation*}
Then we apply a similarity transformation with $P_5$ to the resulting matrix to 
exchange the second and last columns and rows of the submatrix 
$X(5\colon 2k-4,5\colon 2k-4)$. This process is continued 
$m=\floor{n/4}$ times, and in total $m$ two-side permutations applied to $X$. We finally obtain
\begin{equation*}
	PXP^T=\begin{cases}
		\diag(B_1,\dots,B_m,C_m,\dots,C_1), & n = 4m,\\
		\diag(B_1,\dots,B_m,X(k\colon k+1,k\colon k+1),C_m,\dots,C_1), & n = 4m+2,
	\end{cases}
\end{equation*}
where $P:=P_{2m-1}\cdots P_3P_1$ is a permutation matrix because
$P_i=P_i^T$ implies $P^T=P_1^TP_3^T\cdots 
P_{2m-1}^T=P_1P_3\cdots P_{2m-1}$.

The proof for the case $n=2k+1$ is similar, and the slight difference lies in the final step. After the $m=\floor{n/4}$ two-side permutations applied to $X$, we obtain
\begin{equation*}
	PXP^T=\begin{cases}
		\diag(B_1,\dots,B_m,x_{k+1,k+1},C_m,\dots,C_1), & n = 
		4m+1,\\
		\diag(B_1,\dots,B_m,X(k\colon k+2,k\colon k+2),C_m,\dots,C_1), & n = 4m+3.
	\end{cases}
\end{equation*}
Therefore the right-hand side is already in the desired form for the case of $n=4m+1$. If $n = 4m+3$, then
one more transformation is needed for the central $3\times 3$ block $X(k\colon k+2,k\colon k+2)$, such that
\begin{equation*}
	P_{2m+1}(PXP^T)P_{2m+1}^T = 
	\diag(B_1,\dots,B_m,B_{m+1},x_{k+1,k+1},C_m,\dots,C_1),
\end{equation*}
where $P_{2m+1}P$ clearly remains a permutation matrix.
\end{proof}
\end{theorem}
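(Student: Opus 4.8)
The plan is to build the permutation $P$ as a product of the elementary permutations $P_1,P_3,P_5,\dots$ appearing in~\eqref{eq:Y_i-tilde}, stripping off the blocks $B_i$ and $C_i$ two at a time, each conjugation peeling four rows and columns from the centre of the matrix.

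I would begin with a single conjugation by $P_1$. Writing $\sigma$ for the transposition of $2$ and $n$, so that $(P_1XP_1^{T})_{ij}=x_{\sigma(i),\sigma(j)}$, the point is that the index sets $\{1,2,n-1,n\}$ and $\{3,\dots,n-2\}$ are decoupled in $X$, because every cross-pair $\{i,n+1-i\}$ lies entirely inside one of the two. Hence $P_1XP_1^{T}$ is block diagonal: its leading $2\times2$ block is $B_1$, its trailing $2\times2$ block is $C_1$ (the rows and columns of $C_1$ coming in the reversed order forced by $\sigma$ swapping the two outer indices), and its central block is $X(3\colon n-2,3\colon n-2)$, again a cross matrix but of order $n-4$. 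A conjugation by $P_3$ then fixes $B_1$ and $C_1$ (it touches only indices $4$ and $n-2$) and acts on the smaller central cross matrix exactly as $P_1$ acted on $X$, peeling off $B_2$ and $C_2$; iterating, after $m=\floor{n/4}$ two-sided permutations the surviving central block is $X(2m+1\colon n-2m,\,2m+1\colon n-2m)$, a cross matrix of order $n-4m\in\{0,1,2,3\}$.

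This last observation is exactly what produces the four cases: the central block is empty when $n=4m$, the scalar $x_{k+1,k+1}$ when $n=4m+1$, the stated $2\times2$ matrix when $n=4m+2$, and a $3\times3$ cross matrix when $n=4m+3$; in that final case one more conjugation by $P_{2m+1}$ splits the $3\times3$ matrix into $B_{m+1}$ and the scalar $x_{k+1,k+1}$. It then remains to note that $P:=P_{2m-1}\cdots P_3P_1$ (respectively $P_{2m+1}P_{2m-1}\cdots P_1$) is a permutation matrix satisfying $P^{T}=P$: indeed $P_{2j-1}$ swaps $2j$ with $n+2-2j$, and for $1\le j\le m$ the $2m$ indices so moved are pairwise distinct (and disjoint from $\{2m+2,2m+3\}$ when $n=4m+3$), so the factors commute and $P^{T}=P_1P_3\cdots=P$.

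I expect the only real difficulty to be careful bookkeeping: tracking the index arithmetic so the peeled-off blocks match the stated $B_i$ and $C_i$ (in particular the transposed orientation of the $C_i$), and checking that the residue of $n$ modulo $4$, together with the parity split $n=2k$ versus $n=2k+1$, really does collapse into the single four-case statement. A tidier and essentially equivalent route avoids the recursion: starting from $X=Y_1\cdots Y_k$ (Theorem~\ref{thm:fac-ranktwo}, or its reversal in Corollary~\ref{cor:fac-ranktwo}), recall that the $Y_i$ commute pairwise and that $Y_i$ is the identity off rows and columns $\{i,n+1-i\}$, and check that the single permutation $P$ above maps each such index pair, and any fixed central index, to consecutive positions occupying pairwise disjoint blocks; then $PY_iP^{T}$ is block diagonal for every $i$, so $PXP^{T}=\prod_i PY_iP^{T}$ is too, and identifying the $2\times2$ blocks through~\eqref{eq:Y_i-tilde} yields the claimed form.
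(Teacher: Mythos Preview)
Your proposal is correct and follows essentially the same approach as the paper: iteratively conjugate by $P_1,P_3,\dots$ to peel off the $B_i$ and $C_i$ blocks, reducing the central cross submatrix by four at each step, and then read off the four cases from $n\bmod 4$, with the extra conjugation by $P_{2m+1}$ when $n=4m+3$. Your additional observation that the $P_{2j-1}$ act on disjoint index pairs (hence commute, giving $P^{T}=P$) is a slight sharpening of what the paper states, and the alternative route via the $Y_i$ factorization you sketch at the end is a valid but genuinely different argument that the paper does not pursue.
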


We have shown the cross matrix $X$ is unitarily similar to  
a block diagonal matrix with $2\times 2$ blocks and a $1\times 1$ block if $n$ is odd. 
The following corollary follows immediately.

\begin{corollary}\label{cor:f(X)}
Any polynomial $p(X)$ of a cross matrix $X$ remains cross matrix. Furthermore, $f(X)$ is a cross matrix for any analytic function $f$ defined on the spectrum of $X$.
\end{corollary}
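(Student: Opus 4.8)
The plan is to leverage Theorem~\ref{thm:blk-diag}, which gives $PXP^T = D$ for a permutation matrix $P$ and a block-diagonal matrix $D$ whose diagonal blocks are each of size at most $2$. First I would recall the standard fact that any matrix function $f$ that is defined on the spectrum of $X$ (in the sense of the primary matrix function, via the Jordan form or an interpolating polynomial) commutes with similarity transformations: $f(PXP^T) = P f(X) P^T$. Hence $P f(X) P^T = f(D)$, and since $f$ acts block-wise on a block-diagonal matrix, $f(D) = \diag(f(B_1),\dots,f(C_1))$, which is again block diagonal with blocks of size at most $2$ in exactly the same positions. Conjugating back, $f(X) = P^T f(D) P$, so $f(X)$ has the same pattern as $X$.

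The one thing that needs an argument is that conjugating a block-diagonal matrix of this particular shape by $P^T$ produces a matrix whose nonzeros lie only on the main diagonal and anti-diagonal, i.e.\ that $P^T \mathcal{D} P \in \mathcal{X}_n(\mathbb{C})$ whenever $\mathcal{D}$ has the block structure described in Theorem~\ref{thm:blk-diag}. This is most cleanly done by observing that the map $X \mapsto PXP^T$ is a bijection on $M_n(\mathbb{C})$ carrying $\mathcal{X}_n(\mathbb{C})$ into the set $\mathcal{D}_n$ of matrices with that block-diagonal shape; since $\dim \mathcal{X}_n(\mathbb{C}) = \dim \mathcal{D}_n$ (both equal $2n$ for $n$ even, $2n-1$ for $n$ odd) and the map is injective, it restricts to a linear isomorphism $\mathcal{X}_n(\mathbb{C}) \to \mathcal{D}_n$. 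Therefore its inverse $\mathcal{D} \mapsto P^T \mathcal{D} P$ sends $\mathcal{D}_n$ back into $\mathcal{X}_n(\mathbb{C})$, which is the required closure statement. (Alternatively, one can simply note that $P$, being a product of the elementary swaps $P_i$ from~\eqref{eq:Y_i-tilde}, pairs index $j$ with index $n+1-j$ for every $j$ appearing in the off-diagonal positions of the blocks $B_i$, $C_i$, so each block entry lands on either the diagonal or the anti-diagonal of $P^T \mathcal{D} P$.)

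For the polynomial case I would first dispatch it directly: $\mathcal{X}_n(\mathbb{C})$ is a subring of $M_n(\mathbb{C})$ by Lemma~\ref{lem:ring} and contains $I_n$, so it is closed under arbitrary polynomial expressions $p(X) = \sum_j c_j X^j$ with $c_j \in \mathbb{C}$; this needs nothing beyond Lemma~\ref{lem:ring}. The analytic case then follows because $f(X)$, for $f$ analytic on the spectrum of $X$, equals $p(X)$ for some Hermite interpolating polynomial $p$ determined by the values of $f$ and its derivatives at the eigenvalues of $X$; combined with the previous paragraph this gives $f(X) \in \mathcal{X}_n(\mathbb{C})$. I would remark that the block-diagonal route of the first two paragraphs also proves the polynomial case and is perhaps more transparent, so one could present just that argument and obtain both claims at once.

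I do not anticipate a serious obstacle here: the only mild subtlety is being careful that ``analytic function defined on the spectrum'' is interpreted as a primary matrix function so that the similarity-invariance $f(PXP^T) = P f(X) P^T$ genuinely holds, and that the block-diagonal evaluation $f(\diag(B_1,\dots)) = \diag(f(B_1),\dots)$ is valid — both are standard properties of primary matrix functions, so the proof is essentially a short chain of identities once Theorem~\ref{thm:blk-diag} is in hand.
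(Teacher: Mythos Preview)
Your proposal is correct and follows essentially the same approach as the paper: both use the permuted block-diagonal form of Theorem~\ref{thm:blk-diag} to see that powers (or more generally $f$) of $X$ are obtained by acting on the $2\times 2$ blocks and then undoing the permutation, and both reduce the analytic case to the polynomial case via the interpolating-polynomial characterization of a primary matrix function. Your dimension-count showing $P^T\mathcal{D}_nP=\mathcal{X}_n(\mathbb{C})$ and your direct appeal to Lemma~\ref{lem:ring} for the polynomial part make explicit two points the paper leaves implicit, but the underlying argument is the same.
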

\begin{proof}	
For the first part, note that any nonnegative integer powers of $X$ have the same shape.
This follows from the symmetrically permuted block diagonal form in Theorem~\ref{thm:blk-diag}, as it is essentially powering up this block diagonal form and then undoing the permutation.
The second part holds since analytic functions of a matrix can be expressed as a polynomial in the matrix~\cite[Def.~1.4]{high:FM}.
\end{proof}

See~\cite[Prob.~10.14]{high:FM} for an interesting example of the matrix exponential of $X$ that is parameter dependent.

\section{Determinant, inverse, and eigenvalues}\label{sect:det-inv-eig}
It follows from~\eqref{eq:Y_i-tilde} that
\begin{align}\label{Yi-det}
	\det(Y_i) &
	=\det(P_i)\det(\widetilde{Y}_i)\det(P_i) 
	 =\det\left(
	\begin{bmatrix}
		x_{ii}   &    x_{i,n+1-i}       \\
		x_{n+1-i,i} 	 & x_{n+1-i,n+1-i} 
	\end{bmatrix}
	\right) \nonumber \\
	& = x_{ii}x_{n+1-i,n+1-i}-x_{i,n+1-i}x_{n+1-i,i}, \quad i=1\colon \floor{n/2}.
\end{align}
Since $Y_{i}$ is in the form of the identity plus a rank-two
matrix, this formulae could be alternatively derived by using
\begin{equation*}
	\det(I_n+vx^*+wy^*) = (1+v^*x)(1+w^*y)-(v^*y)(w^*x),\quad 
	v, w, x, y\in\C^n,
\end{equation*}
which is a special case of the Weinstein–Aronszajn identity~\cite[Prob.~1.2.17]{hojo13}.
By applying the formula~\eqref{Yi-det} to the factor matrices $Y_i$ in Theorem~\ref{thm:fac-ranktwo}, we obtain the formula for the determinant of cross matrices.
\begin{theorem}\label{thm:X-det}
	For a cross matrix $X=(x_{ij})$,
	\begin{equation*}
		\det(X) =
		\begin{cases}
			\prod_{i=1}^{k} (x_{ii}x_{2k+1-i,2k+1-i}-x_{i,2k+1-i}x_{2k+1-i,i}),
			& n = 2k,\\
			x_{k+1,k+1}\prod_{i=1}^{k}
			(x_{ii}x_{2k+2-i,2k+2-i}-x_{i,2k+2-i}x_{2k+2-i,i}), & n = 2k+1.
		\end{cases}
	\end{equation*}
\end{theorem}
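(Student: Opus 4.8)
The plan is to read the determinant directly off the rank-two factorization of Theorem~\ref{thm:fac-ranktwo} together with the elementary evaluation~\eqref{Yi-det} of $\det(Y_i)$, invoking nothing more than the multiplicativity of the determinant. No new structural input is required; the theorem is essentially a corollary of what has already been established.

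For the even case $n=2k$, Theorem~\ref{thm:fac-ranktwo} gives $X=Y_1Y_2\cdots Y_k$, hence $\det(X)=\prod_{i=1}^{k}\det(Y_i)$, and substituting $n=2k$ into~\eqref{Yi-det} (so that $n+1-i$ becomes $2k+1-i$) turns this into the first branch of the claimed formula. For the odd case $n=2k+1$, Theorem~\ref{thm:fac-ranktwo} contributes the additional factor $\diag(I_k,x_{k+1,k+1},I_k)$, whose determinant is $x_{k+1,k+1}$; multiplying this by $\prod_{i=1}^{k}\det(Y_i)$ and using $n+1-i=2k+2-i$ in~\eqref{Yi-det} produces the second branch. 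One small point to keep straight is that the running index $i$ ranges over $1\colon\floor{n/2}$ in both~\eqref{eq:Y_i} and~\eqref{Yi-det}, and that the anti-diagonal index $n+1-i$ specializes differently in the two parities — but this is the whole of the bookkeeping, and there is no genuine obstacle.

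As an independent check, the same formula can be obtained from the block-diagonal form of Theorem~\ref{thm:blk-diag}: since $\det(P)\det(P^T)=1$ we have $\det(X)=\det(PXP^T)$, and the determinant of the block-diagonal right-hand side is the product of the $\det(B_i)$, the $\det(C_i)$, and the central $1\times1$ or $2\times2$ block. Matching $B_i$ with $Y_{2i-1}$ and $C_i$ with $Y_{2i}$ shows this product coincides term by term with the one above.
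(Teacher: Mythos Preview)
Your proof is correct and follows the same approach as the paper, which simply applies~\eqref{Yi-det} to the factors in Theorem~\ref{thm:fac-ranktwo} via multiplicativity of the determinant. The additional cross-check through Theorem~\ref{thm:blk-diag} is a nice bonus but not present in the original.
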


Based on the determinant formula of Theorem~\ref{thm:X-det}, a convenient approach to find the inverse is via~\cite[sect.~0.8.2]{hojo13}
\begin{equation}\label{eq:X-inv-adj}
	X^{-1} = \adj(X)/\det(X),\quad \det(X)\ne 0,
\end{equation}
where $\adj(X) = \big((-1)^{i+j}\det(X_{ji})\big)$ is the adjugate of $X$, and  $X_{ji}$ denotes the submatrix of $X$ obtained by deleting row $j$ 
and column $i$. 
The next lemma provides an explicit expression for $\det(X_{ji})$.

\begin{lemma}\label{lem：minor-submat}
Define 
$\alpha_j := x_{jj}x_{n+1-j,n+1-j}-x_{j,n+1-j}x_{n+1-j,j}$.
The determinant of the $(j,i)$-th minor submatrix $X_{ji}$ of a cross matrix $X$ is given by
\begin{equation*}
	\det(X_{ji}) =
	\begin{cases}	
	 	x_{n+1-j,n+1-j}\cdot \det(X) / \alpha_j, 
		& i = j,\ i+j\ne n+1, \\
		(-1)^{n} x_{ij}\cdot \det(X) / \alpha_j,
		& i = n+1-j, \ i\ne j, \\
		\det(X) / x_{k+1,k+1}, 
		& i=j=k+1,\ n=2k+1, \\
		0, 
		& \text{otherwise}.
	\end{cases}
\end{equation*}
\end{lemma}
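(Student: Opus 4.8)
The plan is to reduce the computation of $\det(X_{ji})$ to a determinant of a smaller cross-like matrix by exploiting the sparsity pattern. When we delete row $j$ and column $i$ from the cross matrix $X$, the surviving entries still sit only on (pieces of) the two ``diagonals'', so after a suitable permutation of the remaining rows and columns the submatrix $X_{ji}$ becomes block diagonal with $2\times2$ blocks of the form $\left[\begin{smallmatrix} x_{\ell\ell} & x_{\ell,n+1-\ell} \\ x_{n+1-\ell,\ell} & x_{n+1-\ell,n+1-\ell}\end{smallmatrix}\right]$ coming from the pairs $\{\ell, n+1-\ell\}$ untouched by the deletion, together with one or two anomalous rows/columns created by the removal of index $j$ from one pair and index $i$ from another. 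I would organize the proof by the four cases in the statement according to how $i$ and $j$ relate to each other and to the central index, mirroring exactly the case split already used for $\det(X)$ in Theorem~\ref{thm:X-det} and for $Y_i$ in~\eqref{eq:Y_i-tilde}.

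For the ``otherwise'' case ($i\ne j$ and $i\ne n+1-j$), the key observation is that deleting column $i$ kills the entire pair $\{i, n+1-i\}$ worth of columns on one side but deleting row $j$ removes a different pair's row; tracking the cross pattern, one of the rows (namely row $n+1-i$, or row $i$) survives with all its nonzero entries sitting in already-deleted columns, forcing an all-zero row (or column) in $X_{ji}$ after permutation, hence $\det(X_{ji})=0$. This is the easy case and I would dispatch it first. For $i=j$ with $i+j\ne n+1$: permuting the surviving indices pairs up $\{\ell,n+1-\ell\}$ for all $\ell\notin\{j,n+1-j\}$ into the same $2\times2$ blocks appearing in $\det(X)$, while the leftover indices $n+1-j$ (row and column) form a $1\times1$ block equal to $x_{n+1-j,n+1-j}$ — the partner entries $x_{j,n+1-j},x_{n+1-j,j},x_{jj}$ having been deleted. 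Multiplying the block determinants and comparing with the product formula for $\det(X)$ from Theorem~\ref{thm:X-det} gives $\det(X_{ji}) = x_{n+1-j,n+1-j}\det(X)/\alpha_j$, where $\alpha_j$ is exactly the $2\times2$ block determinant that was ``removed'' from the product. The case $i=n+1-j$, $i\ne j$ is analogous: the leftover is a $1\times1$ block equal to $\pm x_{ij}$, and the only extra care is bookkeeping the sign picked up from the permutation that restores block-diagonal form, which accounts for the $(-1)^n$ factor. The central case $i=j=k+1$ with $n=2k+1$ is immediate: deleting the central row and column leaves exactly the even-order cross matrix whose determinant is $\det(X)/x_{k+1,k+1}$ by Theorem~\ref{thm:X-det}.

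I expect the main obstacle to be the careful verification of the permutation signs in the two off-``diagonal'' cases — confirming that the net sign works out to $+1$ in the $i=j$ case and to $(-1)^n$ in the $i=n+1-j$ case — since this requires counting the parity of the permutation that rearranges the surviving $n-1$ indices into consecutive $\{\ell,n+1-\ell\}$ pairs after one index has been stripped from each of two different pairs. A clean way around the sign-chasing is to instead write $X_{ji}$ itself (before permutation) and expand its determinant by repeated Laplace/cofactor expansion along the rows and columns that contain a single nonzero entry, peeling off one factor $x_{\ell\ell}x_{n+1-\ell,n+1-\ell}-x_{\ell,n+1-\ell}x_{n+1-\ell,\ell}$ at a time exactly as in the proof of Theorem~\ref{thm:X-det} via Theorem~\ref{thm:fac-ranktwo}; the induced sign at each step is then transparent. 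Either route is routine once the pattern of surviving entries is drawn out, so the proof is essentially a careful case-by-case bookkeeping exercise anchored on Theorem~\ref{thm:X-det}.
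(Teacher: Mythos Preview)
Your overall plan is sound and close to the paper's, but the ``otherwise'' case contains an error. You claim one of the surviving rows $i$ or $n+1-i$ has all its nonzero entries in deleted columns, giving a zero row. This is false: only column $i$ is removed, so row $i$ still carries $x_{i,n+1-i}$ and row $n+1-i$ still carries $x_{n+1-i,n+1-i}$, both sitting in the surviving column $n+1-i$. Neither row is zero. The correct observation---and the one the paper uses---is that these two surviving rows are linearly \emph{dependent} (each has its single remaining nonzero entry in the same column $n+1-i$), which forces $\det(X_{ji})=0$.

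For the cases $i=j$ and $i=n+1-j$, the paper takes a shorter path than your block-permutation scheme. After deleting row $j$ and column $i$, row $n+1-j$ of $X_{ji}$ has exactly one nonzero entry, namely $x_{n+1-j,n+1-i}$ (its partner $x_{n+1-j,i}$ lived in the deleted column). A single Laplace expansion along that row gives $\det(X_{ji})=\pm\, x_{n+1-j,n+1-i}\cdot M$, where $M$ is the determinant of the submatrix with rows $\{j,n+1-j\}$ and columns $\{i,n+1-i\}$ all removed---an honest $(n-2)\times(n-2)$ cross matrix whose determinant is $\det(X)/\alpha_j$ by Theorem~\ref{thm:X-det}. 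Only one sign needs checking: the parity of the position of that lone entry inside $X_{ji}$. This is essentially your second proposed route, executed in one step rather than by peeling off factors repeatedly, and it sidesteps the permutation-parity bookkeeping you correctly flagged as the main obstacle in your first approach.
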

\begin{proof}
	The key observation is that the submatrix $X_{ji}$ is singular when $j\ne i$ and $j\ne n+1-i$, in which case the rows with these two indices are linearly dependent when the $i$th column is removed.
	A immediate consequence is $\adj(X)$ remains in the cross shape (which is expected from Corollary~\ref{cor:f(X)} and \eqref{eq:X-inv-adj}).
	
	When both $j= i$ and $j= n+1-i$ are satisfied, it is equivalent to $n=2k+1$ with $i=j=k+1$. In this case $X_{ji}$ is already a cross matrix with dimension $2k$, and so $\det(X_{ji}) = \det(X) / x_{k+1,k+1}$. 
	
	If only one of the conditions $j= i$ and $j= n+1-i$ holds, 
	$X_{ji}$ will become a cross matrix if the $(n+1-j)$th row and $(n+1-i)$th column (of the original matrix $X$) are further deleted.  The latter case with $n=2k$,
	is illustrated in~\eqref{eq:X_ji}, and the other cases are similar.
\begin{equation}\label{eq:X_ji}
	X_{ji} = 
	\scalemath{0.8}{
	\begin{bNiceMatrix}[first-row,first-col, name=Matrix, cell-space-limits=1pt]
	&  &  &	&  &  &  &  & i &  &\\ 
	&x_{11} &  & & & & &  & \phantom{1} &  & x_{1n} \\
	& & \ddots & & & & &  &  & \reflectbox{$\ddots$} & \\
j	&\phantom{1} & & x_{jj} &  &  & & & x_{j,i} & & \phantom{1} \\
	& &  & & \ddots & & & \reflectbox{$\ddots$} &  &  & \\
	& & & & & x_{kk}  & x_{k,k+1} &  & & & \\
	& & & & & x_{k+1,k} & x_{k+1,k+1} & & &  &  \\
	& &  & & \reflectbox{$\ddots$} & & & \ddots &  &  & \\
	& & & \Block[draw]{}{x_{n+1-j,n+1-i}} & & & & & x_{n+1-j,i} & &  \\
	& & \reflectbox{$\ddots$} & & & & &  &  & \ddots & \\
	& x_{n1} &  & & & & &  & \phantom{1} &  & x_{nn} \\
	\CodeAfter
	\tikz \draw [black, thick] ([yshift=1mm]Matrix-1-8.north) -- ([yshift=-1mm]Matrix-10-8.south);
	\tikz \draw [black, thick] ([xshift=-2mm,yshift=-.5mm]Matrix-3-1.west) -- ([xshift=2mm,yshift=-.5mm]Matrix-3-10.east);
	\end{bNiceMatrix}},\quad i+j = n+1. 
\end{equation}
	Therefore, in this case the determinant of $X_{ji}$ is obtained by expanding along the $(n+1-j)$th row or the $(n+1-i)$th column and then invoking Theorem~\ref{thm:X-det}.
	Note that the minor corresponding to the element $x_{n+1-j,n+1-i}$ of $X_{ji}$ is equal to $\det(X) / \alpha_j$, and that
	the values of $i$ and $j$ will decide the sum of the row and column indices of $x_{n+1-j,n+1-i}$ in $X_{ji}$, which determines the sign of $\det(X_{ji})$. 
	This index sum will shift by $1$ only when $x_{n+1-j,n+1-i}$ locates on the anti-diagonal. 
	The proof completes by examining the parity of the index sum.
\end{proof}

By applying Lemma~\ref{lem：minor-submat} to the adjugate in~\eqref{eq:X-inv-adj}, we obtain the explicit formula for the inverse of cross matrices.

\begin{theorem}
	The inverse of a cross matrix remains the same shape, and its nonzero elements are given by
	\begin{equation*}
		(X^{-1})_{ij} =
		\begin{cases}
			x_{n+1-j,n+1-j} / \alpha_j,
			& i = j,\ i+j\ne n+1, \\
			-x_{ij}  / \alpha_j, 
			& i = n+1-j, \ i\ne j, \\
			1 / x_{k+1,k+1}, 
			& i=j=k+1,\ n=2k+1.
		\end{cases}
	\end{equation*}
	where $\alpha_j = x_{jj}x_{n+1-j,n+1-j}-x_{j,n+1-j}x_{n+1-j,j}$. 
\end{theorem}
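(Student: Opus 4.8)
The plan is to read the inverse off directly from the adjugate formula~\eqref{eq:X-inv-adj} combined with Lemma~\ref{lem：minor-submat}. Writing the $(i,j)$ entry of $X^{-1}$ as the cofactor $(X^{-1})_{ij} = (-1)^{i+j}\det(X_{ji})/\det(X)$, it suffices to substitute the four-case expression for $\det(X_{ji})$ from the lemma and simplify the sign $(-1)^{i+j}$ in each case. Since the lemma already shows $\det(X_{ji})=0$ whenever $i\ne j$ and $i\ne n+1-j$, this immediately yields that $X^{-1}$ is again a cross matrix, so only the three listed families of nonzero entries remain to be computed.

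I would treat those three cases in turn. For the main-diagonal entries with $i=j$, $i+j\ne n+1$, we have $(-1)^{i+j}=(-1)^{2j}=1$ and the lemma gives $\det(X_{ji})=x_{n+1-j,n+1-j}\det(X)/\alpha_j$, so dividing by $\det(X)$ leaves $x_{n+1-j,n+1-j}/\alpha_j$. For the anti-diagonal entries with $i=n+1-j$, $i\ne j$, the cofactor sign is $(-1)^{i+j}=(-1)^{n+1}$ while $\det(X_{ji})=(-1)^{n}x_{ij}\det(X)/\alpha_j$, and the product of the two signs is $(-1)^{2n+1}=-1$, giving $-x_{ij}/\alpha_j$. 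For the central entry in the odd case $n=2k+1$, $i=j=k+1$, the sign is $(-1)^{2(k+1)}=1$ and $\det(X_{ji})=\det(X)/x_{k+1,k+1}$, giving $1/x_{k+1,k+1}$. These three outcomes match the three branches of the statement.

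Finally I would record two small consistency checks. The denominators are legitimate: Theorem~\ref{thm:X-det} expresses $\det(X)$ as $\prod_{i=1}^{k}\alpha_i$ (times $x_{k+1,k+1}$ when $n$ is odd), so $\det(X)\ne 0$ forces every $\alpha_i$, and $x_{k+1,k+1}$ in the odd case, to be nonzero. Moreover, since $\alpha_j=\alpha_{n+1-j}$ by the symmetry of its defining expression, the denominator in the anti-diagonal case is unambiguous whether written with the index $j$ or with the index $i=n+1-j$. I do not expect any genuine obstacle here: all the real work, in particular the parity argument that pins down the sign of $\det(X_{ji})$, has already been carried out inside Lemma~\ref{lem：minor-submat}, so the present theorem reduces to a one-line substitution into~\eqref{eq:X-inv-adj}.
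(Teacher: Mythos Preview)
Your proposal is correct and follows exactly the route the paper takes: the theorem is stated immediately after Lemma~\ref{lem：minor-submat} with the one-line justification ``by applying Lemma~\ref{lem：minor-submat} to the adjugate in~\eqref{eq:X-inv-adj}''. Your case-by-case sign verification and the consistency checks on the denominators $\alpha_j$ and $x_{k+1,k+1}$ are more explicit than what the paper records, but the underlying argument is identical.
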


In a sense the same form of $X^{-1}$ as $X$ makes the cross matrix seem rather trivial.
But not all methods of interest are invariant under such a 
transformation; for example, it changes the pivots for LU factorization with pivoting.

An alternative approach to get the $X^{-1}$ is to show the $Y_i^{-1}$ has the same structure as the $Y_i$ in~\eqref{eq:Y_i} and then make use of the factorizations of $X$ in Theorem~\ref{thm:fac-ranktwo} and Corollary~\ref{cor:fac-ranktwo}.

For the eigenvalues of $X$, we consider its characteristic polynomial, which is simply $\det(X-\lambda I)$. 
Using the determinant formula in Theorem~\ref{thm:X-det}, we obtain
\begin{equation}\label{eq:eig-val}
	\det(X) =
	\scalemath{0.83}{
	\begin{cases}
		\prod_{i=1}^{k} \left( (x_{ii}-\lambda)(x_{2k+1-i,2k+1-i}-\lambda) -x_{i,2k+1-i}x_{2k+1-i,i}\right),
		& n = 2k,\\
		(x_{k+1,k+1}-\lambda) \prod_{i=1}^{k} \left(
		(x_{ii}-\lambda)(x_{2k+2-i,2k+2-i}-\lambda)-x_{i,2k+2-i}x_{2k+2-i,i}\right), & n = 2k+1.
	\end{cases}}
\end{equation}
Hence the eigenvalues of $X$ are the roots of these $\floor{n/2}$ scalar quadratic equations.

\section{Matrix factorizations}\label{sect:mat-fac}
In this section we show that factors of the commonly used factorizations of the cross matrices preserve the structure, which is a property  possessed by only few special classes of matrices, e.g., diagonal matrices. 

We start our discussion with the LU factorization, followed by its implication for the Cholesky factorization when $X$ is symmetric positive definite.

\begin{theorem}
	Let $X$ be a cross matrix with $x_{ii}\ne 0$, $i=1\colon \floor{n/2}$. Then the LU decomposition without pivoting $X=LU$ produces cross-shaped factor matrices $L$ and $U$. 
	In particular, the nonzero elements in the strictly lower triangular part of $L$ are 
	$\ell_{n+1-i,i} = x_{n+1-i,i} / x_{ii}, i = 1\colon \floor{n/2}$,
	and $U$ has the same elements as the upper triangular part of $X$, except 
	$u_{ii} = \alpha_{i} / x_{n+1-i,n+1-i}, i = \ceil{n/2}+1\colon n$,
	where $\alpha_{i} =x_{ii}x_{n+1-i,n+1-i}-x_{i,n+1-i}x_{n+1-i,i}$.
\end{theorem}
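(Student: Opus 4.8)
The plan is to reduce the claim to a computation on the $2\times 2$ (and, in the odd case, $1\times 1$) blocks of $X$, exploiting the block structure of Theorem~\ref{thm:blk-diag} together with the fact that the permutation used there is a product of the elementary swaps $P_i$ that only touch symmetric pairs of indices $\{i+1,n+1-i\}$. Since $X$ is, up to a symmetric permutation, block diagonal with blocks $B_j,C_j$ (and a central block), and since LU without pivoting commutes in a natural way with block-diagonal structure, one expects the LU factors of $X$ to be assembled from the LU factors of the individual $2\times 2$ blocks. Concretely, I would first write down the $2\times 2$ LU factorization
\begin{equation*}
	\begin{bmatrix}
		x_{ii} & x_{i,n+1-i} \\
		x_{n+1-i,i} & x_{n+1-i,n+1-i}
	\end{bmatrix}
	=
	\begin{bmatrix}
		1 & 0 \\
		x_{n+1-i,i}/x_{ii} & 1
	\end{bmatrix}
	\begin{bmatrix}
		x_{ii} & x_{i,n+1-i} \\
		0 & \alpha_i / x_{ii}
	\end{bmatrix},
\end{equation*}
valid because $x_{ii}\ne 0$, and observe that its Schur complement entry is $\alpha_i/x_{ii}$. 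The hypothesis $x_{ii}\ne 0$ for $i=1\colon\floor{n/2}$ is exactly what guarantees all leading principal minors of $X$ are nonzero (one can see this from Theorem~\ref{thm:X-det} applied to the leading submatrices, which are themselves cross matrices of odd order $2i-1$ after including the relevant diagonal entries), so the LU factorization without pivoting exists and is unique.

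Next I would argue directly, without invoking the permutation, by induction on $n$ using the peeling structure already exploited in the proof of Theorem~\ref{thm:fac-ranktwo}: Gaussian elimination on the first column of $X$ uses only the $(1,1)$ pivot $x_{11}$ to eliminate the single nonzero subdiagonal entry $x_{n1}$, producing multiplier $\ell_{n1}=x_{n1}/x_{11}$ and modifying only the $(n,1)$ and $(n,n)$ positions; the $(n,n)$ entry becomes $x_{nn}-\ell_{n1}x_{1n}=\alpha_1/x_{11}$, and the $(n,1)$ entry becomes $0$. Symmetrically, eliminating in column $n$ is not needed since $U$ is upper triangular; rather, after the first elimination step the remaining active submatrix is $X(2\colon n-1,2\colon n-1)$, which is again a cross matrix of order $n-2$ satisfying the same hypothesis. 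Iterating, the multipliers are precisely $\ell_{n+1-i,i}=x_{n+1-i,i}/x_{ii}$ for $i=1\colon\floor{n/2}$, no fill-in is created (each step only modifies the symmetric anti-diagonal partner), and the pivots produced for rows $\ceil{n/2}+1\colon n$ are $u_{ii}=\alpha_i/x_{n+1-i,n+1-i}$ — note the reindexing: the $i$th elimination step produces a Schur entry at position $(n+1-i,n+1-i)$, so relabelling $i\mapsto n+1-i$ gives the stated formula. In the odd case the central entry $x_{k+1,k+1}$ is never touched and appears unchanged as $u_{k+1,k+1}$, consistent with $\alpha_{k+1}=x_{k+1,k+1}^2$ formally. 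This shows $L$ and $U$ retain the cross shape with the claimed entries.

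The main obstacle, I expect, is purely bookkeeping rather than conceptual: keeping the index translation straight between ``the $i$th elimination step'' (which acts on the pair $\{i,n+1-i\}$ and writes its Schur complement into position $n+1-i$) and ``the diagonal entry $u_{jj}$'' for $j$ in the lower half of the index range, and confirming that nothing in the strictly-upper-triangular part of $U$ changes from $X$ — i.e.\ that the entries $x_{i,n+1-i}$ with $i<n+1-i$ survive elimination intact because they sit above the diagonal and the only row operations that could reach them come from pivots in rows $\le i$, which only affect row $n+1-i>i$. I would verify this last point by noting that at step $r$ the multiplier acts as ``row $n+1-r$ $\leftarrow$ row $n+1-r$ $-\ell_{n+1-r,r}\cdot$ row $r$'', and row $r$ of the current matrix has nonzeros only in columns $r$ and $n+1-r$; hence only columns $r$ and $n+1-r$ of row $n+1-r$ are altered, and since $r\le\floor{n/2}$ while $n+1-r\ge\ceil{n/2}+1$, these updates live strictly on or below the diagonal, leaving the strict upper triangle of $U$ equal to that of $X$. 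The alternative, cleaner route — conjugating by $P$ from Theorem~\ref{thm:blk-diag}, doing block-diagonal LU, and conjugating back — also works, but requires checking that $P L P^T$ and $P U P^T$ are again lower/upper triangular, which is false in general for permutations; so I would present the direct elimination argument as the primary proof and mention the block viewpoint only as motivation.
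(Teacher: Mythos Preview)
Your direct-elimination argument is exactly the paper's approach: $\floor{n/2}$ stages, each with the single multiplier $\ell_{n+1-i,i}=x_{n+1-i,i}/x_{ii}$, no fill-in, and $U$ read off from $L^{-1}X$; you supply more bookkeeping detail than the paper does, and you are right to discard the block-permutation route since conjugating by $P$ destroys triangularity. One small correction: the hypothesis $x_{ii}\ne 0$ for $i\le\floor{n/2}$ does \emph{not} force all leading principal minors of $X$ to be nonzero (for $n=4$ the order-$3$ leading minor is $x_{11}\alpha_2$, and nothing in the hypothesis prevents $\alpha_2=0$), so drop the existence/uniqueness claim---the theorem only asserts what Gaussian elimination produces, and that goes through because the only pivots the algorithm actually divides by are $x_{11},\dots,x_{\floor{n/2},\floor{n/2}}$.
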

\begin{proof}
	There are in total $\floor{n/2}$ stages in the LU factorization to zero the $\floor{n/2}$ anti-diagonal elements in the strictly lower triangular part of $X$. At the $i$th stage, the only nonzero multiplier is $\ell_{n+1-i,i} = x_{n+1-i,i} / x_{ii}$ due to the cross shape of $X$.
	The explicit form of $U$ is then easily obtained by $U=L^{-1}X$. In particular, the first $\ceil{n/2}$ rows of $U$ remain the same as those of $X$ since these rows of $X$ are in upper triangular form.
\end{proof}

\begin{corollary}
	The Cholesky factorization $X=R^TR$ of a symmetric positive definite cross matrix produces cross Cholesky factor $R$.
\end{corollary}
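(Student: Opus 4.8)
The plan is to reduce the statement to the LU factorization just established, together with the closure of $\mathcal{X}_n(\mathbb{C})$ under multiplication from Lemma~\ref{lem:ring}. First I would check that a symmetric positive definite $X$ meets the hypothesis of the preceding theorem: all leading principal submatrices of $X$ are positive definite, so the LU factorization without pivoting exists, and in particular $x_{ii}>0$ for every $i$, hence $x_{ii}\ne 0$ for $i=1\colon\floor{n/2}$. Thus the preceding theorem applies and gives $X=LU$ with $L$ a unit lower-triangular cross matrix and $U$ an upper-triangular cross matrix.

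Next I would pass from the $LU$ factorization to an $LDL^T$ factorization. Setting $D=\diag(u_{11},\dots,u_{nn})$, the matrix $D^{-1}U$ is unit upper triangular, so from $X=X^T=U^TL^T=(U^TD^{-1})(DL^T)$ we obtain a second factorization of $X$ into a unit lower-triangular factor $U^TD^{-1}$ times an upper-triangular factor $DL^T$. Since $X$ is invertible, uniqueness of the LU factorization forces $L=U^TD^{-1}$, equivalently $U=DL^T$ and $X=LDL^T$. Positive definiteness of $X$ makes the pivots $u_{ii}$ positive, so $D^{1/2}:=\diag(u_{11}^{1/2},\dots,u_{nn}^{1/2})$ is a real diagonal matrix and $R:=D^{1/2}L^T$ satisfies $R^TR=LD^{1/2}D^{1/2}L^T=LDL^T=X$. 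As $R$ is upper triangular with positive diagonal entries $u_{ii}^{1/2}$, it is precisely the Cholesky factor of $X$.

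It remains to check that $R$ is a cross matrix. The transpose of a cross matrix is a cross matrix, since the sparsity constraint $j\in\{i,\,n+1-i\}$ is symmetric under $(i,j)\mapsto(j,i)$; hence $L^T\in\mathcal{X}_n(\mathbb{C})$. The diagonal matrix $D^{1/2}$ trivially lies in $\mathcal{X}_n(\mathbb{C})$ as well. By the closure of $\mathcal{X}_n(\mathbb{C})$ under multiplication (Lemma~\ref{lem:ring}), the product $R=D^{1/2}L^T$ is a cross matrix, which completes the proof.

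I do not anticipate a genuine obstacle; the only point that needs care is to derive $R$ from the LU (equivalently $LDL^T$) factorization rather than from the block-diagonalization of Theorem~\ref{thm:blk-diag}. Choleskying the block-diagonal form $PXP^T$ and undoing the permutation produces a cross matrix of the form $\widetilde{R}P$ that is in general \emph{not} upper triangular, hence not the Cholesky factor; routing through the already-proven LU theorem makes the triangularity of $R$ immediate and avoids this pitfall.
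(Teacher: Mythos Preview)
Your proposal is correct and follows essentially the same approach as the paper: reduce to the LU factorization of the preceding theorem, extract the Cholesky factor as $R=D^{1/2}L^T$ (the paper writes this as $R=DL^T$ with $d_{ii}=\sqrt{u_{ii}}$, equivalently $R=D^{-1}U$), and read off the cross structure. Your write-up is more explicit than the paper's---you spell out the uniqueness argument giving $U=DL^T$ and invoke Lemma~\ref{lem:ring} for the product---but the route is the same.
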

\begin{proof}
	For symmetric positive definite $X$ the LU factorization $X=LU$ is unique with no pivoting required~\cite[sect.~10.1]{high:ASNA2}.
	It remains to check $R=D^{-1}U=DL^T$, where $D$ is a diagonal matrix with $d_{ii}=\sqrt{u_{ii}}$, and it  follows from the symmetry of $X$.
\end{proof}

For the QR factorization it is most convenient to prove the result using Givens rotations due to the special structure of the cross matrix $X$, though alternatively it can be proved by using the permutation similarity of $X$ (Theorem~\ref{thm:blk-diag}) and considering QR factorization of the diagonal blocks.

\begin{theorem}
The Givens QR factorization $X=QR$ of a cross matrix $X$ produces cross factor matrices $Q$ and $R$. 
\end{theorem}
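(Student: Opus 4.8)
The plan is to proceed by induction on the "layers" of the cross matrix, peeling off the outermost $2\times 2$ cross structure (rows/columns $i$ and $n+1-i$) one at a time using Givens rotations, mirroring the factorization of Theorem~\ref{thm:fac-ranktwo}. First I would observe that the only nonzero subdiagonal-type entry that needs to be annihilated in the first stage is $x_{n1}$ (the bottom-left anti-diagonal entry), since column $1$ of $X$ has nonzeros only in positions $1$ and $n$. A single Givens rotation $G_1$ acting in the $(1,n)$-plane zeros $x_{n1}$; the key point is that $G_1$ only combines rows $1$ and $n$, and since these two rows of $X$ have their nonzeros confined to columns $\{1,n\}$ (for a cross matrix the first and last rows are supported on the first and last columns), the product $G_1 X$ is still a cross matrix, now with a zero in the $(n,1)$ position. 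So $G_1 X$ has first column $e_1$-aligned and, by the same support argument, its last row is a multiple of $e_n^T$; deleting row/column $1$ and row/column $n$ leaves a cross matrix of order $n-2$.

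The second step is to set up the induction cleanly: after stage $i$, the accumulated orthogonal factor $Q_i^T := G_i\cdots G_1$ (a product of Givens rotations in the planes $(1,n),(2,n-1),\dots,(i,n+1-i)$) satisfies that $Q_i^T X$ is block-structured — upper-triangular-and-cross in the first $i$ and last $i$ coordinates, with an untouched central cross block $X(i+1\colon n-i,\,i+1\colon n-i)$. Each $G_{i+1}$ operates only in the $(i+1,n-i)$-plane, which involves rows whose nonzeros (in the current working matrix) lie in columns $\{i+1,n-i\}$, so again cross structure is preserved and one more anti-diagonal entry is eliminated. After $\lfloor n/2\rfloor$ stages, $R := Q^T X$ is upper triangular; I then need to check $R$ is actually a cross matrix, which follows because at no stage did we create fill-in outside the diagonal/anti-diagonal pattern, and the surviving strictly-upper entries of $R$ are exactly the anti-diagonal entries $x_{i,n+1-i}$ in the top half that were never rotated into.

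For the claim that $Q$ itself is a cross matrix, I would argue that $Q = G_1^T G_2^T\cdots G_{\lfloor n/2\rfloor}^T$ is a product of Givens rotations in the mutually "nested" planes $(1,n),(2,n-1),\dots$; since disjoint-index Givens rotations commute and each sits in a $2\times 2$ block on the $\{j,n+1-j\}$ coordinate pair, their product is block-diagonal after the permutation $P$ of Theorem~\ref{thm:blk-diag} and hence is a cross matrix. Alternatively, and perhaps more transparently, $Q = X R^{-1}$ is a product of two cross matrices (using the inverse formula, $R^{-1}$ is cross), and by Lemma~\ref{lem:ring} the class $\mathcal{X}_n(\mathbb{C})$ is closed under multiplication, so $Q$ is a cross matrix; this also handles the rank-deficient case once one notes the Givens construction never breaks down.

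The main obstacle I anticipate is purely bookkeeping: verifying that the Givens rotation at stage $i$ genuinely acts only on rows/columns whose support is contained in $\{i,n+1-i\}$ after the previous $i-1$ stages — i.e., that no earlier rotation has introduced an entry into row $i$ or row $n+1-i$ outside columns $i$ and $n+1-i$. This is true because rotation $G_j$ for $j<i$ touches only rows $j$ and $n+1-j$, which are disjoint from $\{i,n+1-i\}$, but it must be stated carefully, ideally by making the induction hypothesis assert exactly the cross-shape-plus-partial-triangularization invariant and checking it is maintained. The odd-$n$ case needs the trivial remark that the central $1\times 1$ entry $x_{k+1,k+1}$ requires no rotation and contributes itself to both $Q$ (as a sign on a diagonal entry, if one normalizes) and $R$.
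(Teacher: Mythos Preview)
Your approach is correct and essentially the same as the paper's: both apply Givens rotations $G_i$ in the planes $(i,\,n+1-i)$ and use that each $G_i$ has the shape of $Y_i$ from Theorem~\ref{thm:fac-ranktwo}. The paper orders the conclusion the other way around---first $Q=G_1^T\cdots G_k^T$ is cross by Theorem~\ref{thm:fac-ranktwo}, then $R=Q^{-1}X$ is cross by Corollary~\ref{cor:f(X)} and Lemma~\ref{lem:ring}---which sidesteps the singular-$X$ gap in your alternative $Q=XR^{-1}$ argument (if $X$ is singular so is $R$); but your primary arguments, direct no-fill-in tracking for $R$ and the disjoint-plane product structure for $Q$, handle all cases and are equally valid.
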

\begin{proof}
	For each column $j$ of $X$, the only nonzero element that needs to be zeroed out is on the anti-diagonal, and in total there are $k=\floor{n/2}$ such nonzero elements for $j=1\colon k$.
	Each element elimination can be achieved by applying a Givens rotation~\cite[sect.~19.6]{high:ASNA2} to rows $j$ and $n+1-j$, and therefore, the $j$th Givens rotation matrix $G_j$ (and its transpose) has the same structure as the $Y_j$ in~\eqref{eq:Y_i}.
	We have $G_kG_{k-1}\cdots G_1 X = R$, where $R$ is upper triangular by construction. 
	From Theorem~\ref{thm:fac-ranktwo} we know 
	$Q:= G_1^T\cdots G_{k-1}^TG_k^T$ is a cross matrix.
	The cross form of $R$ follows from the relation $R=Q^{-1}X$, together with Corollary~\ref{cor:f(X)} and Lemma~\ref{lem:ring}.
\end{proof}

Theorem~\ref{thm:blk-diag} has shown that cross matrices can be block-diagonalized with $2\times 2$ diagonal blocks and a $1\times 1$ block if $n$ is odd. The next result shows a diagonalizable cross matrix can be diagonalized by cross matrices.

\begin{theorem}\label{thm:spec-decom}
	If $X$ is a diagonalizable cross matrix, then there exits a spectral decomposition $X=VDV^{-1}$ such that $V$ is cross matrix.
\end{theorem}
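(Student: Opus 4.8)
The plan is to reduce the problem to the block-diagonal form supplied by Theorem~\ref{thm:blk-diag} and diagonalize block by block, then transport the block-diagonalizing eigenvector matrix back through the permutation. Concretely, write $PXP^T = \diag(B_1,\dots,C_1)$ (with the appropriate central block or scalar, depending on $n \bmod 4$) as in Theorem~\ref{thm:blk-diag}. Since $X$ is diagonalizable and similar to this block-diagonal matrix, each $2\times 2$ block $B_i$ and $C_i$ is itself diagonalizable; choose eigenvector matrices $W_{B_i}, W_{C_i}$ (and the trivial $1\times 1$ factor where present) so that $W^{-1}(PXP^T)W = D$ is diagonal, where $W = \diag(W_{B_1},\dots,W_{C_1})$. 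Then $X = (P^T W P)\, (P^T D P)\, (P^T W P)^{-1}$, so it suffices to show that $V := P^T W P$ can be chosen to be a cross matrix, and that the eigenvalue matrix $P^T D P$ is (automatically) a cross matrix — the latter is immediate since it is diagonal, hence trivially cross.

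The crux is therefore the claim that $P^T W P$ is cross. Here I would unwind what $P$ does: recall $P = P_{2m-1}\cdots P_3 P_1$ (or with the extra $P_{2m+1}$ factor), where each $P_i$ is the elementary swap of columns $i+1$ and $n+1-i$. The effect of conjugating $\diag(W_{B_1},\dots)$ by $P^T$ is exactly to reverse the row/column merging performed in the proof of Theorem~\ref{thm:blk-diag}: the $2\times 2$ block $B_i$, which acts on the merged coordinate pair, gets redistributed onto the index pair $\{2i-1, n+2-2i\}$, and similarly $C_i$ onto $\{2i, n+1-2i\}$. Each such pair $\{p, n+1-p\}$ is a "main-diagonal/anti-diagonal" coordinate pair, so a $2\times 2$ matrix placed on that pair contributes entries only in positions $(p,p), (p,n+1-p), (n+1-p,p), (n+1-p,n+1-p)$ — precisely the cross pattern. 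Since distinct blocks occupy disjoint index pairs and together exhaust $1\colon n$, the assembled matrix $V = P^T W P$ has nonzeros only on the main diagonal and anti-diagonal; the central scalar or central $2\times 2$ block (for $n \equiv 1,2 \bmod 4$) sits on the pair $\{k,k+1\}$ or the fixed point $k+1$, again within the cross pattern. This is essentially the same bookkeeping already carried out for $\widetilde Y_i$ in~\eqref{eq:Y_i-tilde} and for the permutation in Theorem~\ref{thm:blk-diag}, just read in reverse, so I would cite those rather than redo the index arithmetic in full.

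One subtlety to handle carefully: invertibility and the precise relation $X = V D V^{-1}$. Since $W$ is block-diagonal with invertible diagonal blocks, $W$ is invertible, and $V = P^T W P$ is invertible with $V^{-1} = P^T W^{-1} P$; because $P^T W^{-1} P$ is the conjugate of a block-diagonal matrix by the same permutation, it is also cross (consistent with the inverse formula of the theorem following Lemma~\ref{lem：minor-submat}), though we do not even need that for the statement. From $W^{-1}(PXP^T)W = D$ we get $X = P^T W P \cdot P^T D P \cdot P^T W^{-1} P = V (P^T D P) V^{-1}$, and $P^T D P$ is diagonal hence a valid (diagonal) spectral factor.

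The main obstacle I anticipate is purely notational: making the four cases $n \in \{4m, 4m+1, 4m+2, 4m+3\}$ line up cleanly with the central block/scalar handling, and convincingly arguing — without a page of index chasing — that conjugating the block-diagonal eigenvector matrix by $P^T$ lands every $2\times 2$ block on a complementary index pair $\{p,n+1-p\}$. I would manage this by explicitly invoking the construction in the proof of Theorem~\ref{thm:blk-diag}: $P$ was built precisely so that $P X P^T$ is block-diagonal, and conjugation is an involutive correspondence, so "undoing" it sends $2\times 2$ blocks back to cross-patterned $2\times2$ sub-patterns on their original complementary index pairs. A one-line remark that $V$ can equivalently be built directly from $X$'s quadratic eigen-equations~\eqref{eq:eig-val} — each pair $\{j, n+1-j\}$ contributing a $2\times 2$ eigenvector sub-block placed on that pair — would serve as a sanity check and an alternative route, but the permutation argument is the cleanest to write.
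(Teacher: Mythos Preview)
Your argument is correct, but it takes the opposite route from the paper. The paper works directly with the eigenvector equation $Xv=\lambda v$: it observes that the $n$ scalar equations decouple into $\lfloor n/2\rfloor$ independent $2\times 2$ homogeneous systems (plus, for odd $n$, the single equation $x_{k+1,k+1}v_{k+1}=\lambda v_{k+1}$), so that for each eigenvalue pair coming from one quadratic factor in~\eqref{eq:eig-val} the corresponding eigenvectors can be chosen to be supported only on the index pair $\{i,n+1-i\}$; stacking these gives a cross $V$. In other words, the paper's primary proof is exactly the ``one-line remark'' you offer at the end as a sanity check. Your approach instead leans on Theorem~\ref{thm:blk-diag}: diagonalize each $2\times 2$ block of $PXP^T$ and conjugate the block-diagonal eigenvector matrix $W$ back through $P$. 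The step that $V=P^TWP$ is cross is justified most cleanly by the observation (implicit in your discussion) that $Y\mapsto PYP^T$ is a linear bijection from $\mathcal{X}_n(\mathbb{C})$ onto the space of block-diagonal matrices with the prescribed $2\times 2$ block pattern---this is precisely what Theorem~\ref{thm:blk-diag} establishes for arbitrary cross $Y$---so its inverse sends any such block-diagonal $W$ back to a cross matrix, with no case-by-case index chasing needed. Your route is more structural and reuses the paper's machinery economically; the paper's route is more elementary and self-contained, and makes the eigenspace decomposition (a direct sum of two-dimensional coordinate planes) visible without reference to the permutation.
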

\begin{proof}
	First consider the case $n=2k$.
	The eigenvector equation $Xv = \lambda v$ for a given eigenvalue $\lambda$ consists of $n$ scalar equations of the form
	\begin{equation*}
		x_{ii}v_i + x_{i,n+1-i} v_{n+1-i} = \lambda v_i,\quad 
		1 \le i \le n.
	\end{equation*}
	each of which only involves $v_i$ and $v_{n+1-i}$.
	This means each eigenvector equation can be decoupled into $k$ homogeneous $2\times 2$ systems
	\begin{equation}\label{eq:eigen-system-2by2}
		\begin{bmatrix}
			x_{ii}-\lambda & x_{i,2k+1-i} \\
			x_{2k+1-i,i} & x_{2k+1-i,2k+1-i} -\lambda
		\end{bmatrix} \begin{bmatrix}
			v_i \\ v_{2k+1-i} 
		\end{bmatrix} = \begin{bmatrix}
		0 \\ 0
		\end{bmatrix},\quad i= 1\colon k.
	\end{equation}
	Suppose $(\lambda_i,\lambda_{2k+1-i})$ is a pair of eigenvalues obtained by solving the scalar quadratic equation from~\eqref{eq:eig-val},	$(x_{ii}-\lambda)(x_{2k+1-i,2k+1-i}-\lambda) - 
		x_{i,2k+1-i}x_{2k+1-i,i} = 0$,
	so it is the eigenvalue pair such that the coefficient matrix of~\eqref{eq:eigen-system-2by2} becomes singular. This implies that the two linearly independent eigenvectors corresponding to $(\lambda_i,\lambda_{2k+1-i})$ can be chosen to have nonzero values in the $i$th and $(2k+1-i)$th positions and zeros everywhere else.
	By construction, each eigenvector has nonzero values only in two positions $(i, 2k+1-i)$, $i=1\colon 2k$, and therefore the full eigenvector matrix $V$ forms a cross matrix.
	
	The case $n=2k+1$ is similar; the main difference is that the eigenvector equation is now decomposed into $k$ linear systems of size $2\times 2$ plus the uncoupled equation $x_{k+1,k+1}v_{k+1} = \lambda v_{k+1}$. The latter equation means $x_{k+1,k+1}$ is an eigenvalue of $X$, and its corresponding eigenvector can be chosen as the $(k+1)$th standard basis vector, which is linearly independent to the other $2k$ eigenvectors by construction.	
\end{proof}

The proof of Theorem~\ref{thm:spec-decom} reveals that eigenspace of diagonalizable cross matrices is a direct sum of two-dimensional subspaces and a one-dimensional subspace if $n$ is odd. This is a consequence of the intrinsic pairing symmetry in the eigenvector equation of cross matrices. 
Indeed, this eigenspace structure can also be seen from Theorem~\ref{thm:blk-diag}, which implies that the diagonalizability of $X$ depends on that of each diagonal blocks therein.

\begin{theorem}\label{thm:svd}
	For a cross matrix $X$, there exists a SVD, $X=U\Sigma V^*$, such that both $U$ and $V$ are cross matrices.
\end{theorem}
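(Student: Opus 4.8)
The plan is to exploit the block-diagonalization of Theorem~\ref{thm:blk-diag} rather than work with $X$ directly. By that result there is a permutation matrix $P$ such that $PXP^T$ is block diagonal with diagonal blocks $B_i$, $C_i$ of size at most $2$, and possibly a $1\times 1$ scalar block $x_{k+1,k+1}$ when $n$ is odd. First I would compute an SVD of each diagonal block separately: for each $2\times 2$ block $M$ (one of the $B_i$ or $C_i$) write $M = U_M \Sigma_M V_M^*$ with $U_M, V_M \in \C^{2\times 2}$ unitary; for the scalar block $x_{k+1,k+1} = \mathrm{e}^{\mathrm{i}\theta}|x_{k+1,k+1}|$ take the trivial SVD with singular value $|x_{k+1,k+1}|$. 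Assembling these block SVDs gives a factorization $PXP^T = \widehat U \widehat\Sigma \widehat V^*$ in which $\widehat U$, $\widehat\Sigma$, $\widehat V$ are all block diagonal with the same block structure, hence unitary (resp.\ nonnegative diagonal for $\widehat\Sigma$).

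Next I would undo the permutation: from $PXP^T = \widehat U \widehat\Sigma \widehat V^*$ we get
\begin{equation*}
	X = (P^T \widehat U P)(P^T \widehat\Sigma P)(P^T \widehat V P)^*,
\end{equation*}
using $P^{-1}=P^T$. Set $U := P^T \widehat U P$, $\Sigma := P^T \widehat\Sigma P$, $V := P^T \widehat V P$. Each of these is a product of the permutation $P^T$, a block-diagonal matrix of the shape appearing in Theorem~\ref{thm:blk-diag}, and $P$; by the converse direction of that theorem (the same symmetric permutation $P$ conjugates any matrix of that block-diagonal shape back to a cross matrix), $U$, $\Sigma$, and $V$ are all cross matrices. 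Moreover $U$ and $V$ are unitary since conjugation by a permutation preserves unitarity, $\Sigma$ is a nonnegative diagonal cross matrix (in fact diagonal), and $U\Sigma V^* = X$, so this is a genuine SVD of $X$ with the desired structure.

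The one point needing a little care — and the main (mild) obstacle — is the bookkeeping that the block-diagonal matrices $\widehat\Sigma$, $\widehat U$, $\widehat V$ built from the per-block SVDs really do have exactly the diagonal-block pattern that Theorem~\ref{thm:blk-diag} requires in order for $P^T(\cdot)P$ to land back in $\mathcal{X}_n(\C)$: the blocks must sit in the same positions as the $B_i$, $C_i$ (and the central block), which they do by construction since we perform the SVD blockwise without mixing blocks. One should also note that $\Sigma$ is diagonal (its anti-diagonal entries vanish) because each $2\times 2$ $\widehat\Sigma$-block $\Sigma_M=\diag(\sigma_1,\sigma_2)$ is itself diagonal, and a diagonal matrix conjugated by $P$ stays diagonal; alternatively $\Sigma$ being a cross matrix that is also positive semidefinite forces it to be diagonal. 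No singular-value ordering is claimed, which is consistent with the phrasing ``there exists an SVD''. The odd-$n$ case is handled identically, with the extra scalar block contributing $|x_{k+1,k+1}|$ to $\Sigma$ and a unimodular scalar to $U$ (with $V$'s corresponding entry equal to $1$), which is again compatible with the cross structure.
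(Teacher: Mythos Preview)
Your proposal is correct and follows essentially the same approach as the paper: block-diagonalize via Theorem~\ref{thm:blk-diag}, take SVDs of the $2\times 2$ (and possibly $1\times 1$) blocks, and conjugate back by $P^T(\cdot)P$ to obtain cross-shaped unitary factors and a diagonal $\Sigma$ (not necessarily ordered). Your handling of the scalar block via a unimodular phase is in fact slightly more careful than the paper's, which speaks only of ``appropriate sign''.
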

\begin{proof}
	Suppose $X$ is converted into the one of the block diagonal forms in Theorem~\ref{thm:blk-diag}, say,
	$PXP^T = \diag(B_1,\dots,B_m,C_m,\dots,C_1)$.
	If the SVDs of each of the $2\times 2$ blocks are computed such that
	$B_i=U_i \Sigma_i V_i^*$ and
	$C_i=U_{2m+1-i}\Sigma_{2m+1-i} V_{2m+1-i}^*$, $i=1\colon m$, then we have
	\begin{equation*}
		PXP^T = \diag(U_1,\dots,U_{2m})\cdot 
		\diag(\Sigma_1,\dots,\Sigma_{2m})\cdot 
		\diag(V_1^*,\dots,V_{2m}^*).
	\end{equation*}
	The desired SVD of $X$ is obtained by taking 
	$U = P^T \diag(U_1,\dots,U_{2m}) P$ and 
	$V = P^T\diag(V_1,\dots,V_{2m}) P$, which are unitary by construction and obviously have the same sparsity structure as $X$.
	The singular values of $X$ are contained in the diagonal matrix
	$\Sigma = P^T\diag(\Sigma_1,\dots,\Sigma_{2m}) P$, albeit not necessarily in descending order. 
	
	The proofs for the other three cases are similar; for odd $n$ the left- and right singular matrices of the one-dimensional diagonal block should be taken as the identity matrix with appropriate sign, depending on the sign of $x_{k+1}$.
	\end{proof}

	The polar decomposition with cross polar factors can be obtained via the SVD discussed in Theorem~\ref{thm:svd}, as the next result shows.
	
	\begin{theorem}
	If $X$ is nonsingular, then the polar decomposition $X=UH$ is unique and both the unitary polar factor $U$ and the Hermitian polar factor $H$ are cross matrices; 
	otherwise, there exists a polar decomposition $X=UH$ with cross factors $U$ and $H$.
	\end{theorem}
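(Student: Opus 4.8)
The plan is to derive the polar decomposition directly from the SVD constructed in Theorem~\ref{thm:svd}, exploiting the fact that the set $\mathcal{X}_n(\mathbb{C})$ is closed under multiplication (Lemma~\ref{lem:ring}) and under conjugate transposition (which is immediate from the defining sparsity pattern, since transposing preserves the main diagonal and the anti-diagonal, and conjugation acts entrywise). First I would invoke Theorem~\ref{thm:svd} to write $X = U\Sigma V^*$ with $U, V$ cross matrices and $\Sigma$ a (generally unordered) nonnegative diagonal matrix. Setting $W := UV^*$ and $H := V\Sigma V^*$, one immediately has $X = WH$, and $W$ is cross since it is a product of two cross matrices (Lemma~\ref{lem:ring}), while $H$ is cross since $V$, $\Sigma$, and $V^*$ are all cross. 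It is routine that $W$ is unitary and $H$ is Hermitian positive semidefinite, so $X = WH$ is a polar decomposition with cross factors; this settles the existence claim in the singular case.

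For the nonsingular case I would first recall the standard fact that the polar decomposition of a nonsingular matrix is unique, with $H = (X^*X)^{1/2}$ the unique Hermitian positive definite square root and $U = XH^{-1}$. Then the task reduces to showing that these \emph{unique} factors are automatically cross matrices. For $H$: since $X$ is cross, $X^*X$ is cross (Lemma~\ref{lem:ring}), it is Hermitian positive definite, and its principal square root is an analytic function of $X^*X$ on its spectrum, so by Corollary~\ref{cor:f(X)} the matrix $H = (X^*X)^{1/2}$ is cross. For $U$: $H$ is nonsingular and cross, hence $H^{-1}$ is cross (by the explicit inverse formula, or again by Corollary~\ref{cor:f(X)}), and therefore $U = XH^{-1}$ is a product of cross matrices, hence cross. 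Uniqueness then guarantees that the cross factors produced by the SVD construction above coincide with these, so there is nothing to reconcile.

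I do not expect any serious obstacle here; the result is essentially a corollary of Theorem~\ref{thm:svd} and Corollary~\ref{cor:f(X)} together with the ring structure. The only point requiring a little care is the positive \emph{semi}definiteness of $H$ in the singular case and making sure the SVD-based factors genuinely form a polar decomposition (as opposed to merely a factorization into a unitary times a Hermitian matrix) — i.e.\ checking $H = V\Sigma V^* \succeq 0$, which is immediate since $\Sigma \succeq 0$. A second minor bookkeeping point is the odd-$n$ case of the SVD, where the one-dimensional block contributes a $\pm 1$; this does not affect the argument since $\pm 1$ is a legitimate $1\times 1$ unitary factor and the corresponding Hermitian block is $|x_{k+1,k+1}|$. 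I would present the proof in the two-case structure (singular via SVD, nonsingular via uniqueness plus Corollary~\ref{cor:f(X)}), which keeps each step short and transparent.
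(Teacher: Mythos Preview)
Your proposal is correct and follows essentially the same route as the paper: it obtains the Hermitian factor as $(X^*X)^{1/2}$ via Lemma~\ref{lem:ring} and Corollary~\ref{cor:f(X)}, gets the unitary factor as $XH^{-1}$ in the nonsingular case, and in the singular case builds a cross unitary factor from the SVD of Theorem~\ref{thm:svd}. The only cosmetic difference is that you present the SVD-based existence argument first and spell out $H=V\Sigma V^*$ explicitly, whereas the paper treats $H$ uniformly via $(X^*X)^{1/2}$ and then branches on singularity for $U$.
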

	\begin{proof}
	The Hermitian polar factor $H$ is given by $H=(X^*X)^{1/2}$~\cite[Thm.~8.1]{high:FM}, whose cross sparsity pattern follows from Lemma~\ref{lem:ring} and then Theorem~\ref{cor:f(X)}. 	
			
	If $X$ is nonsingular, the unitary polar factor $U$ is uniquely determined by $U = XH^{-1}$, which is a cross matrix. If $X$ is singular, there are infinitely many unitary polar factors but the one with a cross shape is obtained by multiplying the left- and right singular matrices of the SVD in Theorem~\ref{thm:svd}.
	\end{proof}


\section*{Acknowledgment}
This work was initiated when the author was a research associate at The University of Manchester, during which time the valuable comments from the late Prof.~Nick Higham on a draft manuscript are gratefully acknowledged. 

\bibliographystyle{myplain2-doi}
 \bibliography{notebib}

\end{document}